\newtheorem{thm}{Theorem}[section]
\newtheorem{prop}[thm]{Proposition}
\theoremstyle{definition}
\newtheorem{defn}[thm]{Definition}
\newtheorem{exam}[thm]{Example}
\theoremstyle{remark}
\numberwithin{equation}{section}
\begin{document}

\title[Leibniz algebras whose  solvable ideal is the maximal extension of the nilradical]{Leibniz algebras whose  solvable ideal is the maximal extension of the nilradical}
\author{K.K.~Abdurasulov\textsuperscript{1}, Z.Kh.~Shermatova\textsuperscript{1}}

\address{\textsuperscript{1}V.I. Romanovskiy Institute of Mathematics, Uzbekistan Academy of Sciences, University street, 9,  Tashkent, 100174, Uzbekistan}

\email{abdurasulov0505@mail.ru}
\email{z.shermatova@mathinst.uz}

\begin{abstract} The paper is devoted to the so-called complete Leibniz algebras.
  It is known that a Lie algebra with a complete ideal is split.
 We  will prove that this result is valid for   Leibniz algebras  whose complete ideal is a solvable  algebra such that the codimension of
 nilradical is equal to the number of generators
of the nilradical.
 \end{abstract}

\subjclass[2010] {17A32, 17A36, 17B30, 17B56.}

\keywords{Leibniz algebra, complete algebra, semisimple algebra,
 radical, nilradical, derivation, inner derivation.}

\maketitle

\section{Introduction}

Lie algebras, which all derivations are inner and the center is zero, appeared in M. Goto's work \cite{Goto}, although at that time, there were few algebras of this kind, therefore, their meaning could not be understood very well. The first important result on complete Lie algebras was given in \cite{Schenkman}, in the context of Schenkman's theory of subinvariant Lie algebras. The term of "complete Lie algebras" was firstly used by N. Jacobson \cite{jacobson}, it was proved that arbitrary Lie algebra with a complete ideal is split. In recent years, various authors have focused on the classification and structural properties of complete Lie algebras. In \cite{Meng} D.J. Meng has proved that any finite-dimensional complete Lie algebra can be decomposed into the direct sum of simple complete ideals, and such a decomposition is unique up to the order of the ideals. Also, Y.C. Gao and D.J. Meng \cite{Gao} first gave a necessary and sufficient condition for some solvable Lie algebras with $l$-step nilradicals to be complete and a method for constructing non-solvable complete Lie algebras.

Some scientists used different approaches to introduce the notion of complete Leibniz algebras, and later it was accepted that a Leibniz algebra is called complete if its center is trivial and all derivations are inner, as in Lie algebras, by J.M. Ancochea and R.Campoamor-Stursberg \cite{Ancochea}. Authors proved the completeness of the solvable Leibniz algebra with the null-filiform nilradical. Such algebras are also important from a cohomological point of view and many complete Leibniz algebras belong to the class of cohomologically rigid algebras.  Later J.K. Adashev and others \cite{Adashev1} showed that solvable Leibniz algebra with nilradical being naturally graded $p$-filiform Leibniz algebra of maximal codimension is complete. Moreover, B.A. Omirov and U.X. Mamadaliyev \cite{Mamadaliyev} classified complete solvable Leibniz algebras with the nilradical of an arbitrary characteristic sequence. Also,   K.K. Abdurasulov \cite{Abdurasulov} proved the completeness of a solvable Leibniz algebra with the nilradical of codimension equals the number of generators of the nilradical.

The aim of the present article is  extend some results obtained for complete Lie algebras
to the Leibniz algebras case. We construct some Leibniz algebras with complete
radical in this work.
It is known that a Lie algebra with a complete ideal is split. We will prove that this result is valid for   Leibniz algebras  whose complete ideal is a solvable Leibniz algebra such that the codimension of
 nilradical is equal to the number of generators
of the nilradical.

Throughout the paper we denote  by $L$ a finite-dimensional Leibniz algebra over the field of complex numbers.

\section{Preliminaries}

In this section we present some necessary definitions and preliminary results  which are used in  this paper.

\begin{defn}\cite{Loday} A  \textit{Leibniz algebra} $L$ is a vector space  equipped with a bilinear map $[-,-] : L\times L \rightarrow L$ (multiplication) satisfying the Leibniz identity
\[ \big[x,[y,z]\big]=\big[[x,y],z\big] - \big[[x,z],y\big] \]
for all $x,y,z\in L.$
\end{defn}

The theory of Leibniz algebras has developed very intensively in many different directions.
Some of the results of this theory were presented in the book \cite{AyupovOmirovRakhimov}.


For a given Leibniz algebra $L$ we define the following two-sided ideal
$$Z(L) =\{x \in L \mid [x,y]=[y,x] = 0,\ \text{for \ all}\ y \in L \}$$
called 
the \emph{center} of $L.$

\label{der}  A linear map $d: L\rightarrow L$ on a Leibniz algebra $L$ is said to be a {\it derivation} if
for all $x, y\in L,$ the following condition holds:
$$d([x,y])=[d(x),y] + [x, d(y)].$$

The set of all derivations of $L$ (denoted by $Der(L)$) forms a Lie algebra with respect to the commutator $[d, d'] = d \circ d' - d' \circ d$ for $d,d' \in Der(L)$.

Note that the operator of right multiplication by an element $x\in L$ (further denoted by $R_x$) is a derivation, which is called {\it inner derivation}. Derivations which are not inner are called {\it outer derivations.}
The set $Inner(L) = \{R_x : x \in L\}$  of all inner derivations forms a Lie algebra
with respect to the  commutator, moreover, the following identity holds:
$$R_xR_y-R_yR_x=R_{[y,x]}.$$


\begin{defn} A Lie algebra $\mathfrak{g}$ is called  \emph{complete}
  if  $Z(\mathfrak{g})=\{0\}$ and all derivations of $\mathfrak{g}$ are
inner.
\end{defn}

\begin{exam} a) Semisimple Lie algebras over a field of characteristic $0$ are
complete.

b) The (unique) non-abelian Lie algebra of dimension $2$ is  complete.

\end{exam}

\begin{defn} \cite{Meng} A complete Lie algebra $\mathfrak{g}$ is called a simple complete Lie algebra, if any non-trivial ideal of $\mathfrak{g}$ is not complete.
\end{defn}

Obviously, simple Lie algebra over the field of characteristic 0 and the non-abelian
Lie algebra of dimension 2 are simple complete Lie algebras.

\begin{prop}\cite{jacobson} \label{prop11}
If $R$ is complete  and an ideal in a Lie algebra $\mathfrak{g}$, then $\mathfrak{g}=R\oplus S,$ where $S$ is an ideal.
\end{prop}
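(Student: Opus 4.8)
The plan is to produce the complementary summand $S$ explicitly as the centralizer of $R$ in $\mathfrak{g}$,
\[
S := C_{\mathfrak{g}}(R) = \{\, x \in \mathfrak{g} : [x,r]=0 \ \text{for all}\ r \in R \,\},
\]
and then to check that $\mathfrak{g} = R \oplus S$ with $S$ an ideal. The whole argument is driven by completeness of $R$: it forces the adjoint action of any outside element on $R$ to be realized \emph{inside} $R$, and the trivial center guarantees that this realization is unique.

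First I would note that, since $R$ is an ideal, for every $x \in \mathfrak{g}$ the restriction $\mathrm{ad}_x|_R \colon R \to R$, $r \mapsto [x,r]$, is a well-defined derivation of $R$ (a direct consequence of the Jacobi identity). As $R$ is complete, this derivation is inner, so there is $r_x \in R$ with $[x,r] = [r_x, r]$ for all $r \in R$; and $r_x$ is unique, because two choices differ by an element of $Z(R) = \{0\}$. Putting $s_x := x - r_x$ we get $[s_x, r] = 0$ for all $r \in R$, i.e. $s_x \in S$, whence $x = r_x + s_x \in R + S$. Thus $\mathfrak{g} = R + S$. The sum is direct: any $z \in R \cap S$ satisfies $[z,r]=0$ for all $r\in R$ and lies in $R$, so $z \in Z(R) = \{0\}$. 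Hence $\mathfrak{g} = R \oplus S$ as vector spaces.

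It remains to verify that $S$ is an ideal, which is the only place a computation is needed. Given $s \in S$, $x \in \mathfrak{g}$ and $r \in R$, the Jacobi identity yields $[[x,s],r] = [x,[s,r]] - [s,[x,r]]$; here $[s,r]=0$ since $s$ centralizes $R$, while $[x,r]\in R$ (as $R$ is an ideal) is again killed by $s$, so $[[x,s],r]=0$. Therefore $[x,s]\in S$, and by antisymmetry $[s,x]\in S$ as well, so $[\mathfrak{g},S]\subseteq S$ and $S$ is an ideal. I expect the main subtlety to be not the splitting itself but this last point: both halves of completeness are genuinely used---``all derivations inner'' to place $r_x$ in $R$, and ``trivial center'' both to make $r_x$ unique and to force $R\cap S = \{0\}$---and the ideal property of $R$ is exactly what lets the Jacobi computation close up.
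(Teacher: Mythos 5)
Your proof is correct, and it is exactly the classical argument: the paper itself gives no proof of this proposition, citing it to Jacobson, and Jacobson's proof is precisely your centralizer construction --- use completeness to write each $x\in\mathfrak{g}$ as $x=r_x+s_x$ with $\mathrm{ad}_x|_R=\mathrm{ad}_{r_x}|_R$, take $S=C_{\mathfrak{g}}(R)$, and use $Z(R)=\{0\}$ together with the Jacobi identity to get directness and the ideal property of $S$. Nothing is missing; all steps (the derivation property of $\mathrm{ad}_x|_R$, the role of each half of completeness, and the closing Jacobi computation) are sound.
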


By analogy with Lie algebras, now we give the definition of complete Leibniz algebra that was first introduced in \cite{Ancochea}.
\begin{defn} \cite{Ancochea} \label{def}  A Leibniz algebra $L$ is called \textit{complete} if $Z(L)=\{0\}$ and all derivations of $L$ are
inner.
\end{defn}

Now we present the notion of simple complete Leibniz algebra.
\begin{defn} A complete Leibniz algebra $L$ is called a simple complete Leibniz algebra, if any non-trivial ideal of $L$ is not complete.
\end{defn}

We define the following sequences:
\[L^1=L, \ L^{k+1}=[L^k,L],  \ k \geq 1, \qquad \qquad
L^{[1]}=L, \ L^{[k+1]}=[L^{[k]},L^{[k]}], \ k \geq 1,\]
so-called the  \emph{lower central} and the \emph{derived series} of $L$, respectively.

\begin{defn} A Leibniz algebra $L$ is called
\emph{nilpotent} (respectively, \emph{solvable}), if there exists $s\in\mathbb N$ ($m\in\mathbb N$) such that $L^{s}=\{0\}$ (respectively, $L^{[m]}=\{0\}$).
 The minimal number $s$ (respectively, $m$) with such property is called index of nilpotency (respectively, index of solvability) of $L.$
\end{defn}

The maximal nilpotent (respectively, solvable) ideal of a Leibniz algebra is said to be the \emph{nilradical} (respectively, \emph{radical})
of the  algebra.

In \cite{Barnes} D. Barnes proved an analogue of Levi's theorem for Leibniz algebras; namely, an arbitrary Leibniz algebra $L$ is decomposed into a semidirect sum of its solvable radical and a semisimple Lie subalgebra, $L=S\dot{+} R$. The subalgebra $S,$ similarly to Lie algebras theory, is called a \emph{Levi subalgebra} of the Leibniz algebra $L$.

\begin{prop}\cite{Gorbachev} \label{radnil} Let $R$ be the radical of a Leibniz algebra $L,$ and $N$ be its nilradical. Then
$[L,R]\subset N.$
\end{prop}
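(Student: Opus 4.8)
The plan is to prove that $[L,R]$ is a \emph{nilpotent} ideal of $L$; since $N$ is the maximal nilpotent ideal, this gives $[L,R]\subseteq N$ at once. The first step is routine: $[L,R]\subseteq R$ because $R$ is an ideal, and $[L,R]$ is itself a two-sided ideal. The only nontrivial inclusion, $[[L,R],L]\subseteq[L,R]$, follows from the Leibniz identity written as $[[l,r],l']=[l,[r,l']]+[[l,l'],r]$: since $[r,l']\in R$, both terms on the right lie in $[L,R]$. The inclusion $[L,[L,R]]\subseteq[L,R]$ is immediate from $[L,R]\subseteq R$.

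It remains to prove nilpotency, and here the essential Leibniz-specific device is the ideal $\Leib(L)=\langle[x,x]:x\in L\rangle$. From the Leibniz identity one gets $[z,[x,x]]=0$ for all $x,z$, so $\Leib(L)$ is abelian and satisfies $[L,\Leib(L)]=0$; in particular it is a nilpotent ideal, hence $\Leib(L)\subseteq N$. Consequently, for $x\in L$ and $r\in R$ the symmetrized bracket $[x,r]+[r,x]\in\Leib(L)\subseteq N$, so modulo $N$ the \emph{left} multiplications may be traded for \emph{right} multiplications. This is what lets us work with the representation $x\mapsto R_x$, which (unlike $L_x$) consists of derivations and satisfies $R_xR_y-R_yR_x=R_{[y,x]}$, i.e. is genuinely Lie-algebraic.

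Now I would invoke the Levi--Barnes decomposition $L=S\dot{+}R$ with $S$ semisimple, and split the right-multiplication analogue of $[L,R]$ as $[R,S]+[R,R]$. The piece $[R,R]$ is handled by solvability of $R$: over $\mathbb{C}$ the operators $\{R_x:x\in R\}$ are simultaneously triangularizable (the Leibniz form of Lie's theorem), so $R_x$ is strictly triangular, hence nilpotent, for every $x\in[R,R]$; by Engel's theorem for Leibniz algebras $[R,R]$ is then nilpotent.

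The crux is the cross-term involving the semisimple part. Here $R$ is a module over $S$, and since $S$ is semisimple and the ground field has characteristic $0$, Weyl's theorem makes $R$ completely reducible, splitting off the $S$-invariants from the part on which $S$ acts nontrivially. On that nontrivial part a trace computation, combined with Cartan's solvability criterion applied to the solvable algebra of right multiplications $\{R_x:x\in R\}$, should force the relevant operators $R_{[r,s]}$ to be nilpotent; Engel's theorem then yields that this cross-term is nilpotent as well. Assembling the pieces shows $[L,R]$ is a nilpotent ideal and therefore contained in $N$. I expect the main obstacle to be precisely this semisimple cross-term: one must reconcile the complete reducibility of the $S$-action with the nilpotency estimates, all the while keeping track of the failure of antisymmetry---which is exactly why the reduction through $\Leib(L)\subseteq N$ and the use of the right-multiplication representation are needed.
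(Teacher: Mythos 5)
The paper offers no proof of this proposition at all: it is imported from \cite{Gorbachev} as a known preliminary, so your attempt can only be judged against the standard arguments, not against anything in the text. Your framework is sound and several pieces are genuinely correct: $[L,R]$ is a two-sided ideal of $L$ contained in $R$ (your Leibniz-identity computation is right); the ideal $\Leib(L)$ spanned by squares satisfies $[L,\Leib(L)]=0$, is abelian, hence lies in $N$, which legitimizes trading left brackets for right brackets modulo $N$; and the piece $[R,R]$ is handled completely, since $\{R_x\mid x\in R\}$ is a solvable Lie algebra of operators on $R$ (because $[R_x,R_y]=R_{[y,x]}$), classical Lie's theorem triangularizes it simultaneously, every $R_z$ with $z\in[R,R]$ (sums of commutators included) is then strictly triangular, and Engel's theorem for Leibniz algebras makes $[R,R]$ a nilpotent ideal of $L$, hence $[R,R]\subseteq N$.

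The genuine gap is exactly where you predict it: the cross-term. For $[R,S]$ you only list ingredients (Weyl's theorem, ``a trace computation'', Cartan's criterion) and assert they ``should force'' $R_{[r,s]}$ to be nilpotent; no argument is given, and this step is the whole content of the proposition---already for Lie algebras, $[\mathfrak{g},\mathrm{rad}\,\mathfrak{g}]\subseteq\mathrm{nilrad}\,\mathfrak{g}$ is nontrivial precisely because of this term. Moreover, the element-by-element statement you aim at could not be fed into Engel even if proved: Engel needs $R_z$ nilpotent for \emph{every} $z$ in the ideal, i.e.\ for arbitrary sums $\sum_i[r_i,s_i]$, and nilpotent operators are not closed under addition. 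The standard repair is global rather than term-by-term. Let $\mathcal{G}=\{R_x|_R : x\in L\}\subseteq\mathfrak{gl}(R)$, a Lie algebra of operators, and let $\mathfrak{r}=\{R_x|_R : x\in R\}$, a solvable ideal of $\mathcal{G}$. On each irreducible constituent of a composition series of $R$ as a $\mathcal{G}$-module, Lie's theorem together with the invariance lemma (the weight space of an ideal is invariant under the whole algebra) shows that $\mathfrak{r}$ acts by scalars; hence the entire subspace $[\mathcal{G},\mathfrak{r}]$ annihilates every constituent and so consists of nilpotent operators, sums included. Since $R_{[x,r]}=[R_r,R_x]$ and $R_{[r,x]}=[R_x,R_r]$ both lie in $[\mathcal{G},\mathfrak{r}]$ for $x\in L$, $r\in R$, every $z\in[L,R]$ has $R_z|_R$ nilpotent; Engel for Leibniz algebras then gives that the ideal $[L,R]$ is nilpotent, so $[L,R]\subseteq N$. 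Note that this single argument subsumes your $[R,R]$ case, needs neither Weyl's theorem nor Cartan's criterion, and makes the left/right trading through $\Leib(L)$ unnecessary, because the right-multiplication operator of \emph{any} bracket with one entry in $R$ is automatically a commutator with one entry in $\mathfrak{r}$. Without this (or an equivalent reduction through the liezation $L/\Leib(L)$, where one must additionally control the action on $\Leib(L)$ itself), your proposal does not yet prove the statement.
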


Let $R$ be a solvable Leibniz algebra with the nilradical $N.$ We denote by $Q$ the complementary vector space of the nilradical $N$ to the algebra $R.$
For a solvable
Leibniz algebra with a given nilradical the restriction of the right multiplication operator on elements
of the subspace complementary to the nilradical is non-nilpotent outer derivation of the nilradical.
Therefore, the dimension of the subspace complementary to the nilradical is not greater than maximal
number of nil-independent outer derivations of the nilradical of the solvable Leibniz algebra.

For a finite-dimensional nilpotent Leibniz algebra  $N$ and for the matrix of the linear operator $R_x,$ denote by $C(x)$ the
descending sequence of its Jordan blocks' dimensions with an arbitrary element from the set $N \setminus N^2.$ Consider the
lexicographical order on the set of such sequence, i.e. $C(x)=(n_1,n_2,\ldots,n_k)\leq C(y)=(m_1,m_2,\ldots,m_s)$
 if and only if there exists $i\in \mathbb{{N}}$ such that $n_j=m_j$ for any $j<i$ and $n_i<m_i.$


\begin{defn} The sequence
$$C(N)=\max\limits_{x\in N\setminus N^2} C(x)  $$ is said to be the
characteristic sequence of the nilpotent Leibniz algebra $N.$
\end{defn}

Let $N$ be a nilpotent Leibniz algebra with the characteristic sequence
$(m_1,\ldots,m_s)$ and multiplication table
$$N_{m_1,\dots,m_s}: [e^t_i,e^1_1]=e^t_{i + 1},\ 1 \leq t\leq s,\, 1\leq i\leq m_t-1.$$

 Consider a solvable Leibniz algebra $R$ with the nilradical $N_{m_1,\dots,m_s}$ and codimension of nilradical is $s.$ The following theorem asserts that such a Leibniz algebra is unique up to isomorphism.

\begin{thm}\cite{Mamadaliyev} An arbitrary solvable Leibniz algebra with the nilradical $N_{m_1,\dots,m_s}$ of codimension $s$ is isomorphic to the algebra:
 $$R(N_{m_1,\dots,m_s},s): \left\{\begin{array}{ll}
[e^t_i,e^1_1]=e^t_{i+1}, & 1\leq t\leq s, \,1\leq i\leq m_t-1 ,\\[1mm]
[e^1_i,x_1]=ie^1_i, & 1\leq i\leq m_1,\\[1mm]
[e^t_i,x_1]=(i-1)e^t_i, & 2\leq t\leq s, \,2\leq i\leq m_t,\\[1mm]
[e^t_i,x_t]=e^t_i, & 2\leq t\leq s, \,1\leq i\leq m_t,\\[1mm]
[x_1,e^1_1]=-e^1_1, \\[1mm]
 \end{array}\right.$$
where $\{x_1,\ldots x_s\}$ is a basis of the complementary vector space.
\end{thm}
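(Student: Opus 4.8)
The plan is to realize $R$ as an extension of its nilradical $N=N_{m_1,\dots,m_s}$ by a complementary subspace $Q$ of dimension $s$, and to pin down every structure constant by combining an explicit description of $\mathrm{Der}(N)$ with the Leibniz identity. Write $R=N\dot{+}Q$ with $Q=\langle x_1,\dots,x_s\rangle$. Since $N$ is the nilradical, for each $x\in Q$ the right multiplication $R_x$ restricts to a derivation of $N$, and because $\dim Q=s$ these $s$ derivations must be nil-independent: otherwise a nontrivial combination $\sum c_k R_{x_k}|_N$ would be nilpotent, so $\sum c_k x_k$ could be adjoined to $N$, contradicting maximality of the nilradical. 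Thus the first task is to describe $\mathrm{Der}(N)$, in particular its non-nilpotent part.

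Next I would compute the derivations of $N$. A derivation is determined by its values on the generators $e^1_1,\dots,e^s_1$ of $N$ (the elements of $N\setminus N^2$, spanning an $s$-dimensional $N/N^2$), since every other basis vector is produced by right multiplication by $e^1_1$ via $e^t_{i+1}=[e^t_i,e^1_1]$, and the derivation property then propagates the action recursively. Carrying out this propagation, I would exhibit the $s$ commuting diagonal derivations
\[
D_1:\ e^1_i\mapsto i\,e^1_i,\ \ e^t_i\mapsto (i-1)e^t_i\ (t\ge 2),\qquad
D_t:\ e^t_i\mapsto e^t_i,\ \ e^u_j\mapsto 0\ (u\ne t),
\]
verify directly that each satisfies the derivation identity, and check that $D_1,\dots,D_s$ are nil-independent. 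These span a maximal torus $\mathcal T$ in $\mathrm{Der}(N)$, while the remaining outer derivations and all inner ones are nilpotent.

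Then I would normalize the basis of $Q$. The semisimple parts of $R_{x_1}|_N,\dots,R_{x_s}|_N$ are commuting semisimple derivations, hence generate a torus of dimension at most $s$; nil-independence forces dimension exactly $s$, so they span a maximal torus. Since all maximal tori of $\mathrm{Der}(N)$ are conjugate under $\mathrm{Aut}(N)$, applying a suitable automorphism of $N$ together with a linear change of basis in $Q$ lets me assume $R_{x_k}|_N=D_k$ for $1\le k\le s$, which yields the right multiplications recorded in lines two through four of the table. To fix the left multiplications $[x_k,e^t_i]$ I would use that in any Leibniz algebra $[L,[y,y]]=0$ (put $z=y$ in the defining identity, then polarize), which annihilates the left action on $N^2$, together with $[x_k,[e^t_i,e^1_1]]=[[x_k,e^t_i],e^1_1]-[[x_k,e^1_1],e^t_i]$ to propagate the constraints down each string. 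A short computation (e.g. the identity on $(e^1_1,x_1,e^1_1)$) forces the leading coefficient of $[x_1,e^1_1]$ to be $-1$, while all other left products vanish; the residual $N$-terms are absorbed by a change $x_k\mapsto x_k+n_k$.

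Finally I would dispose of the products $[x_i,x_j]$. As $R$ is solvable it coincides with its own radical, so Proposition \ref{radnil} gives $[Q,Q]\subset[R,R]\subset N$. Decomposing $N$ into weight spaces for $\mathcal T$, each bracket $[x_i,x_j]$ lies in the weight space matching the eigenvalue of $D_i+D_j$; because $D_1$ acts invertibly off the bottom layer, a change of basis $x_k\mapsto x_k+n_k$ with $n_k\in N$ absorbs these terms without disturbing the already-standardized diagonal action, returning the algebra to the stated normal form. I expect the main obstacle to be the derivation computation combined with the conjugacy-of-tori step: correctly verifying that the diagonal derivations exhaust the semisimple part and that an automorphism of $N$ simultaneously diagonalizes the given operators $R_{x_k}|_N$ is where the genuine content lies, after which the elimination of the left products and of $[Q,Q]$ is routine bookkeeping governed by the Leibniz identity.
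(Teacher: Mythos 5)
You should first be aware that the paper contains no proof of this statement at all: it is quoted from the reference \cite{Mamadaliyev}, where it is established by an explicit computation --- one writes down the general form of a derivation of $N_{m_1,\dots,m_s}$, expresses each operator $R_{x_k}|_N$ as such a derivation with undetermined coefficients, and then grinds through the Leibniz identities and basis changes until the table emerges. Your skeleton (nil-independence of the $R_{x_k}|_N$, description of $\mathrm{Der}(N)$, elimination of the left products and of $[Q,Q]$) matches that strategy, and your diagonal derivations $D_1,\dots,D_s$ are correct and do span an $s$-dimensional torus. But the step where you pass from ``nil-independent derivations'' to ``$R_{x_k}|_N=D_k$'' contains genuine gaps, and it is exactly where the content of the theorem lies.

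Concretely: (i) the operators $R_{x_k}|_N$ do not commute; they commute only modulo $\mathrm{Inner}(N)$, and here $\mathrm{Inner}(N)$ is just the line $\mathbb{C}R_{e^1_1}$, since right multiplication by every basis vector other than $e^1_1$ is zero. Jordan semisimple parts of non-commuting operators need not commute, so your assertion that the semisimple parts of the $R_{x_k}|_N$ ``are commuting semisimple derivations, hence generate a torus'' is unjustified; moreover, the dimension count you base on nil-independence uses that the semisimple part of a linear combination is the corresponding combination of semisimple parts, which again holds only for commuting operators. (ii) Even granting such a torus and a Mostow-type conjugacy theorem for $\mathrm{Aut}(N)$ (which for Leibniz algebras needs a citation or a proof --- you give neither), conjugation by an automorphism preserves Jordan decompositions: it can align the semisimple parts with $\mathcal{T}$, but it cannot remove the nilpotent parts of the $R_{x_k}|_N$. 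Nor can the substitution $x_k\mapsto x_k+n_k$ remove them, because it changes $R_{x_k}|_N$ only by an inner derivation, i.e.\ by a multiple of the single operator $R_{e^1_1}$, whereas $\mathrm{Der}(N_{m_1,\dots,m_s})$ contains a sizable space of nilpotent \emph{outer} derivations (for instance $e^t_1\mapsto e^t_{m_t}$, all other basis vectors to $0$). The vanishing of these nilpotent parts is not a normalization; it has to be forced by the Leibniz identities of $R$ (those involving $[x_i,x_j]$, the left products, and the maximality $\dim Q=s$), and this is precisely the computation your proposal defers to ``routine bookkeeping.'' Until that is carried out, neither the normal form nor the uniqueness assertion of the theorem is established.
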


It should be noted that the Leibniz algebra $R(N_{m_1,\dots,m_s},s)$ is also complete \cite{Mamadaliyev}.

Now as an analogue of Proposition \ref{prop11}, we give the following conjecture for complete Leibniz algebra.

\textbf{Conjecture.} \cite{Complete} If a complete Leibniz algebra $R$ is an ideal in the Leibniz algebra $L$, then $L=R\oplus S,$ where $S$ is an ideal in $L$.


In \cite{Complete} it is shown that  the above conjecture is true for a Leibniz algebra whose semisimple part is $\mathfrak{sl_2}$ and the complete ideal is a solvable Leibniz algebra with the nilradical $N_{m_1,\dots,m_s}.$

\begin{thm} \cite{Complete} Let $L$ be a Leibniz algebra such that $L=R\dot{+}\mathfrak{sl_2}$ be its Levi decomposition,  where $R$ is a complete solvable ideal $R(N_{m_1,\dots, m_s},s),$
then $L=R\oplus \mathfrak{sl_2}.$ In other words, $L$ is the direct sum of ideals.
\end{thm}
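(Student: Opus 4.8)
The plan is to show that both ``cross brackets'' between $R$ and $\mathfrak{sl_2}$ vanish; since $R$ is already an ideal and $\mathfrak{sl_2}$ is a subalgebra, the equalities $[R,\mathfrak{sl_2}]=0$ and $[\mathfrak{sl_2},R]=0$ immediately upgrade $\mathfrak{sl_2}$ to an ideal and turn the Levi sum into a direct sum of ideals $L=R\oplus\mathfrak{sl_2}$. Because $R$ is a two-sided ideal, for each $s\in\mathfrak{sl_2}$ the restriction of the right multiplication $R_s$ to $R$ and the left multiplication $\lambda_s\colon r\mapsto[s,r]$ are well-defined endomorphisms of $R$. I would treat the right action first, as it mirrors the classical Lie-theoretic argument behind Proposition~\ref{prop11}, and then bootstrap from it to control the genuinely Leibniz-specific left action.

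For the right action, first note that $R_s|_R\in\Der(R)$: applying the Leibniz identity to $r_1,r_2\in R$ and $s$ gives $[[r_1,r_2],s]=[[r_1,s],r_2]+[r_1,[r_2,s]]$, which is exactly the derivation property on $R$. Using $R_xR_y-R_yR_x=R_{[y,x]}$ and the invariance of $R$, the assignment $s\mapsto -R_s|_R$ is a homomorphism of Lie algebras $\mathfrak{sl_2}\to\Der(R)$. Since $R$ is complete we have $\Der(R)=\Inner(R)$, and $\Inner(R)$ is solvable: the surjection $r\mapsto R_r$ carries the derived series of the Leibniz algebra $R$ onto that of the Lie algebra $\Inner(R)$, so $R^{[m]}=0$ forces $\Inner(R)^{[m]}=0$. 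As $\mathfrak{sl_2}$ is simple, the image of the homomorphism is either $0$ or isomorphic to $\mathfrak{sl_2}$, and the latter cannot lie inside the solvable algebra $\Inner(R)$. Hence the homomorphism is zero, i.e. $[r,s]=0$ for all $r\in R$, $s\in\mathfrak{sl_2}$, so $[R,\mathfrak{sl_2}]=0$.

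For the left action I would exploit the vanishing just obtained. Applying the Leibniz identity to $s_1,s_2\in\mathfrak{sl_2}$ and $r\in R$ yields $[[s_1,s_2],r]=[s_1,[s_2,r]]+[[s_1,r],s_2]$; the last term lies in $[R,\mathfrak{sl_2}]=0$ because $[s_1,r]\in R$, so as operators on $R$ we obtain $\lambda_{[s_1,s_2]}=\lambda_{s_1}\lambda_{s_2}$. Swapping $s_1$ and $s_2$ and using antisymmetry of the bracket on $\mathfrak{sl_2}$ gives $\lambda_{s_1}\lambda_{s_2}+\lambda_{s_2}\lambda_{s_1}=0$ for all $s_1,s_2$, whence in particular $\lambda_s^2=0$. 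Writing $\mathfrak{sl_2}=\langle e,f,h\rangle$ with $[h,e]=2e$ and $[e,f]=h$, I would then compute $2\lambda_e=\lambda_{[h,e]}=\lambda_h\lambda_e=\lambda_e\lambda_f\lambda_e=-\lambda_e^{2}\lambda_f=0$, so $\lambda_e=0$; symmetrically $\lambda_f=0$, and then $\lambda_h=\lambda_e\lambda_f=0$. Thus $[\mathfrak{sl_2},R]=0$, and combined with the previous step this yields $L=R\oplus\mathfrak{sl_2}$ as a direct sum of ideals.

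The main obstacle is the second step. The right action is governed by the classical mechanism --- a semisimple algebra mapping into the derivation algebra of a complete solvable algebra must vanish --- but left multiplications in a Leibniz algebra are not derivations, so this mechanism does not apply directly to $[\mathfrak{sl_2},R]$. The point is that once $[R,\mathfrak{sl_2}]=0$ is in hand, the Leibniz identity collapses the left action into the rigid multiplicative relation $\lambda_{[s_1,s_2]}=\lambda_{s_1}\lambda_{s_2}$, and the simple structure of $\mathfrak{sl_2}$ then forces this representation to be trivial. I would also remark that the argument uses only that $R$ is complete and solvable, not the explicit form $R(N_{m_1,\dots,m_s},s)$, so it in fact settles the Conjecture stated above whenever the Levi factor is $\mathfrak{sl_2}$.
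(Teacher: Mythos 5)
Your proof is correct, but it takes a genuinely different route from the one behind this statement. The theorem is imported here from \cite{Complete}, and both there and in the two analogous results this paper proves itself (Proposition~\ref{Abelianprop} and the concluding theorem of Section~\ref{Par3.3}), the method is coordinate computation: the cross products $[R,\mathfrak{sl_2}]$ and $[\mathfrak{sl_2},R]$ are expanded over a basis with undetermined coefficients (Proposition~\ref{radnil} confines them to the nilradical), and repeated Leibniz identities on specific basis elements kill the coefficients family by family; the argument is thus tied to the explicit multiplication table of $R$ and, in the main theorem, needs an induction over the quotients $R/N^m$. You argue structurally instead. Your first step is the classical Lie-theoretic mechanism, which survives in the Leibniz setting because right multiplications are still derivations: $s\mapsto -R_s|_R$ is a Lie algebra homomorphism $\mathfrak{sl_2}\to\Der(R)=\Inner(R)$ (this is where completeness enters), $\Inner(R)$ is solvable since $\Inner(R)^{[m]}$ is spanned by $\{R_z : z\in R^{[m]}\}$, and simplicity of $\mathfrak{sl_2}$ forces the homomorphism, hence $[R,\mathfrak{sl_2}]$, to vanish; the sign bookkeeping with $[R_x,R_y]=R_{[y,x]}$ and antisymmetry inside the Lie subalgebra $\mathfrak{sl_2}$ is handled correctly. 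Your second step correctly isolates and resolves the genuinely Leibniz difficulty (left multiplications are not derivations): with $[R,\mathfrak{sl_2}]=0$ in hand, the Leibniz identity yields $\lambda_{[s_1,s_2]}=\lambda_{s_1}\lambda_{s_2}$, hence anticommutativity and $\lambda_s^2=0$, and the computation $2\lambda_e=\lambda_h\lambda_e=\lambda_e\lambda_f\lambda_e=-\lambda_e^2\lambda_f=0$ with its analogues kills the left action; all operators are well defined because $R$ is a two-sided ideal. What your approach buys: it never touches the multiplication tables, only that $R$ is a \emph{solvable, complete} radical complemented by $\mathfrak{sl_2}$, so it proves this theorem, Proposition~\ref{Abelianprop}, and the paper's main theorem in one stroke, and it settles the Conjecture in the case where the complete ideal is the full solvable radical and the Levi factor is $\mathfrak{sl_2}$ (your closing remark should carry that precision: if the complete ideal were a proper subideal of the radical, the Levi-complement argument would not apply as stated). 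What the paper's method buys: it is elementary and self-contained — notably, its computations never actually invoke completeness, only the explicit tables — and it produces the basis-level information useful for classification; the price is that it must be redone for each new nilradical, which is exactly the redundancy your argument eliminates.
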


In this work we will prove that the previous conjecture is valid for Leibniz algebras whose its complete ideal is a solvable Leibniz algebra with the nilradical of the maximal codimension.

\section{Leibniz algebras whose  complete ideal is a solvable algebra with the nilradical of the
maximal codimension}\label{Par3.3}

%

In this section  we show that if $L$ is a Leibniz algebras which has $\mathfrak{sl_2} $ as a semisimple part,   and  its complete ideal is a  solvable Leibniz algebra such that the codimension
of the nilradical is equal to the number of generators of the nilradical, then $L$ is the direct sum of ideals.

We denote by ${\mathcal A(k)}$ the $k$-dimensional abelian algebra,  by $R({\mathcal A(k)},s)$ the class of solvable Leibniz algebras with the nilradical ${\mathcal A(k)}$
of codimension  $s.$

Next theorem is devoted to the classification of solvable Leibniz algebras with the nilradical ${\mathcal A(k)}$ of the  maximal dimension.
 \begin{thm}\cite{Adashev} The maximal possible dimension of algebras of the family $R({\mathcal A(k)}, s)$ is equal to $2k$, that is, $s=k$.
Moreover, an arbitrary algebra of the family $R({\mathcal A(k)}, k)$ is decomposed into a direct sum of copies of two-dimensional non-trivial solvable Leibniz algebras.
\end{thm}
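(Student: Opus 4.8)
My plan is to reduce the statement to linear algebra on the nilradical and then exploit the Leibniz identity to split the extremal algebra. Throughout I write $R=N\oplus Q$ with $N=\mathcal{A}(k)$ and $\dim Q=s$, and I fix a basis $x_1,\dots,x_s$ of $Q$.

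First I would bound $s$. Since $N$ is abelian, every linear endomorphism of $N$ is a derivation, so each restriction $A_i:=R_{x_i}|_N$ lies in $\mathfrak{gl}(N)\cong\mathfrak{gl}_k$. The key observation is that these operators commute: by Proposition \ref{radnil} we have $[Q,Q]\subseteq[R,R]\subseteq N$, and right multiplication by an element of the abelian ideal $N$ annihilates $N$; hence, restricting the identity $R_{x_i}R_{x_j}-R_{x_j}R_{x_i}=R_{[x_j,x_i]}$ to $N$ gives $[A_i,A_j]=R_{[x_j,x_i]}|_N=0$. On the other hand the $A_i$ are nil-independent, since a nontrivial combination $\sum\alpha_iA_i$ that were nilpotent would make $N\oplus\langle\sum\alpha_ix_i\rangle$ a nilpotent ideal properly containing the nilradical. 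Writing the Jordan--Chevalley decompositions $A_i=S_i+\mathcal{N}_i$, commutativity forces all $S_i,\mathcal{N}_i$ to commute, and then nil-independence of the $A_i$ is equivalent to linear independence of the semisimple parts $S_1,\dots,S_s$. Being commuting and semisimple, the $S_i$ are simultaneously diagonalizable, hence span a subspace of the $k$-dimensional algebra of diagonal matrices; therefore $s\le k$ and $\dim R=k+s\le 2k$.

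Next I would analyse the extremal case $s=k$. Here the independent semisimple parts $S_1,\dots,S_k$ span all diagonal matrices, so some combination has $k$ distinct eigenvalues; each $\mathcal{N}_i$ commutes with this combination and is therefore diagonal, whence $\mathcal{N}_i=0$. Thus every $A_i$ is diagonalizable, a common eigenbasis $e_1,\dots,e_k$ of $N$ diagonalizes all of them simultaneously, and since the $A_i$ span the full diagonal a change of basis in $Q$ normalizes the weights to the identity, i.e. $[e_j,x_i]=\delta_{ij}e_j$.

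Finally I would split the algebra. The remaining products $[x_i,e_j]$ and $[x_i,x_j]$ lie in $N$ by Proposition \ref{radnil}, and feeding them into the Leibniz identity, together with the normalization just obtained, forces the left action to be diagonal as well: one finds $[x_i,e_j]=\mu_{ij}e_j$ with $\mu_{ij}=0$ for $i\ne j$ and $\mu_{ii}\in\{0,-1\}$, and that $[x_i,x_j]\in\langle e_i,e_j\rangle$ for $i\ne j$ while $[x_i,x_i]\in\langle e_i\rangle$. A last change of basis $x_i\mapsto x_i+(\text{element of }N)$, which preserves $[e_j,x_i]=\delta_{ij}e_j$, removes the cross brackets, so that $R=\bigoplus_{i=1}^{k}\langle e_i,x_i\rangle$ is a direct sum of ideals; each summand is $2$-dimensional, non-abelian (as $[e_i,x_i]=e_i\ne0$) and solvable, being either the non-split $2$-dimensional Lie algebra ($\mu_{ii}=-1$) or the non-Lie $2$-dimensional solvable Leibniz algebra ($\mu_{ii}=0$). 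I expect the main obstacle to be exactly this last step: the careful Leibniz-identity bookkeeping needed to confine every cross bracket to the span of the corresponding pair and to check that a single change of basis simultaneously decouples all $k$ factors.
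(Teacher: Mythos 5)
First, note that the paper contains no proof of this statement: it is imported verbatim from \cite{Adashev}, and the text only records the resulting multiplication table $[f_i,x_i]=f_i$, $[x_i,f_i]=\alpha_if_i$ with $\alpha_i\in\{-1,0\}$. So your argument can only be measured against the standard approach of the cited source, which it essentially reconstructs, and on its own merits it is sound. The bound $s\le k$ is correct exactly as you set it up: $[A_i,A_j]=R_{[x_j,x_i]}|_N=0$ because $[Q,Q]\subseteq[R,R]\subseteq N$ (Proposition \ref{radnil}) and right multiplication by elements of the abelian $N$ kills $N$; nil-independence of the $A_i$ (the standard fact quoted in the paper's preliminaries, and easy to verify here since $M=N\oplus\langle x\rangle$ satisfies $M^2\subseteq N$ and $M^{j+1}=A_x(M^j)$ for $j\ge2$) is equivalent to linear independence of the commuting semisimple parts $S_i$, and commuting semisimple operators lie in a $k$-dimensional algebra of diagonal matrices. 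The extremal analysis ($\mathcal{N}_i=0$ via a combination with distinct eigenvalues, simultaneous diagonalization, normalization $[e_j,x_i]=\delta_{ij}e_j$) is also correct, and attainment of the bound is witnessed by the direct sums themselves.

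The only place where your wording, taken literally, would fail is the last step, which you yourself flag as the main obstacle. Write $[x_i,e_j]=\mu_{ij}e_j$ and $[x_i,x_j]=\sum_tc_{ij}^te_t$. A shift $x_i\mapsto x_i+\sum_t\lambda_i^te_t$ changes the self-bracket by $[x_i,x_i]\mapsto[x_i,x_i]+\lambda_i^i(1+\mu_{ii})e_i$; in the Lie-type case $\mu_{ii}=-1$ this coefficient vanishes, so \emph{no} change of basis removes $[x_i,x_i]$. Instead its vanishing is forced: the Leibniz identity gives $[x_i,[x_i,x_i]]=0$, hence $\mu_{ii}c_{ii}^i=0$ and so $c_{ii}^i=0$ whenever $\mu_{ii}=-1$ (when $\mu_{ii}=0$ the shift does remove it). Likewise, for $i\ne j$ the single parameter $\lambda_i^j$ must kill the $e_j$-components of both $[x_i,x_j]$ and $[x_j,x_i]$ simultaneously, which works only because the Leibniz identity supplies the compatibility $c_{ji}^j=\mu_{jj}c_{ij}^j$; and the claim $\mu_{ij}=0$ for $i\ne j$ needs two inputs combined: the identity $[x_i,[x_j,e_j]]=[[x_i,x_j],e_j]-[[x_i,e_j],x_j]$, giving $\mu_{ij}(1+\mu_{jj})=0$, together with $[e_j,x_i]+[x_i,e_j]\in\Ann_r(R)$, giving $\mu_{ij}\mu_{jj}=0$. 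All of this bookkeeping does go through, so your proof is correct once it is written out; just be aware that the splitting is partly forced vanishing rather than purely a change of basis.
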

Then there exists a basis $\{f_1, f_2, \dots, f_k, x_1, x_2, \dots, x_k\}$ of the family of algebras $R({\mathcal A(k)}, k)$ such that the multiplication table has the form:
$$[f_i,x_i]=f_{i},\quad
[x_i,f_i]=\alpha_if_i, \quad 1\leq i\leq k,$$
where $\alpha_{i}\in\{-1,0\}.$

From \cite{Adashev} it is known that all derivations of an arbitrary algebra of the family $R({\mathcal A(k)}, k)$  are  inner
 and the center of the algebra is trivial, so  an arbitrary solvable algebra  from the family  $R({\mathcal A(k)}, k)$ is complete.

We give the following proposition  and use it to prove  the next theorem.
\begin{prop}\label{Abelianprop} Let $L$ be a Leibniz algebra such that $L=R\dot{+}\mathfrak{sl_2} $ is its
Levi decomposition, where the complete solvable ideal $R$ is  from the family of algebras $R({\mathcal A(k)}, k).$
Then $L=R\oplus \mathfrak{sl_2} $ in other words, $L$ is the direct sum of ideals.
\end{prop}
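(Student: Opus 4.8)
The goal is to show that $\mathfrak{sl_2}$ commutes with $R$ on both sides, i.e. $[R,\mathfrak{sl_2}]=0$ and $[\mathfrak{sl_2},R]=0$. Once this is established, $\mathfrak{sl_2}$ becomes a two-sided ideal (since $[\mathfrak{sl_2},L]=[\mathfrak{sl_2},\mathfrak{sl_2}]=\mathfrak{sl_2}=[L,\mathfrak{sl_2}]$) with $R\cap\mathfrak{sl_2}=0$ and $R+\mathfrak{sl_2}=L$, which is exactly the asserted decomposition $L=R\oplus\mathfrak{sl_2}$. Throughout I would use the explicit basis $\{f_1,\dots,f_k,x_1,\dots,x_k\}$ of $R$ with $N=\langle f_1,\dots,f_k\rangle$ the abelian nilradical, the relations $[f_i,x_i]=f_i$, $[x_i,f_i]=\alpha_if_i$, and the observation that $[R,R]=N$.

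First I would treat the right action. Since $R$ is an ideal, for each $s\in\mathfrak{sl_2}$ the inner derivation $R_s$ of $L$ restricts to a derivation of $R$; completeness of $R$ forces it to be inner, so there is $a_s\in R$ with $[r,s]=[r,a_s]$ for every $r\in R$. Because $[R,a_s]\subseteq[R,R]=N$, this yields $[R,\mathfrak{sl_2}]\subseteq N$, while Proposition \ref{radnil} gives $[\mathfrak{sl_2},R]\subseteq N$ as well. Using the identity $R_xR_y-R_yR_x=R_{[y,x]}$ one checks that $s\mapsto -R_s$ is a homomorphism $\mathfrak{sl_2}\to\mathfrak{gl}(R)$, so $R$ becomes an $\mathfrak{sl_2}$-module. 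Writing $a_s=\sum_l c_lf_l+\sum_l d_lx_l$ and computing $[f_j,a_s]=d_jf_j$ shows that each line $\langle f_j\rangle$ is a one-dimensional submodule; since $\mathfrak{sl_2}$ has no non-trivial one-dimensional representations, every $\langle f_j\rangle$ is a trivial module and $[N,\mathfrak{sl_2}]=0$. As the induced action on $R/N$ is trivial too (the action lands in $N$), all composition factors of $R$ are trivial, and Weyl complete reducibility forces the whole module $R$ to be trivial; hence $[R,\mathfrak{sl_2}]=0$.

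Next I would treat the left action, which is the genuinely Leibniz part of the argument: left multiplications are not derivations, so completeness cannot be invoked directly, and instead I bootstrap from $[R,\mathfrak{sl_2}]=0$. For $s,s'\in\mathfrak{sl_2}$ and $r\in R$ the Leibniz identity gives $[s,[s',r]]=[[s,s'],r]-[[s,r],s']$, and the correction term $[[s,r],s']$ now vanishes because $[s,r]\in R$ and $[R,\mathfrak{sl_2}]=0$; thus $s\mapsto(r\mapsto[s,r])$ is a homomorphism $\mathfrak{sl_2}\to\mathfrak{gl}(R)$ with image in $N$, making $R$ an $\mathfrak{sl_2}$-module under the left action. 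Applying the Leibniz identity to $[s,[f_j,x_j]]=[s,f_j]$ and using $[s,x_j]\in N$, the fact that $N$ is abelian, and $[f_l,x_j]=\delta_{lj}f_j$, one finds $[s,f_j]\in\langle f_j\rangle$, so again each $\langle f_j\rangle$ is a trivial one-dimensional submodule and $[\mathfrak{sl_2},N]=0$. Since $R/N$ is trivial under the left action as well, complete reducibility gives $[\mathfrak{sl_2},R]=0$, and the decomposition follows.

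The main obstacle is exactly this left action. In the Lie case Jacobson's splitting follows from a single centralizer argument, but two features of the Leibniz setting break that route: left multiplication is not a derivation, so completeness controls only the right action; and $R$ typically has a non-zero right annihilator (namely $\langle f_j:\alpha_j=0\rangle$), so the naive centralizer need not be complementary to $R$. The remedy is to pass to the $\mathfrak{sl_2}$-module structure and exploit that each generator line $\langle f_j\rangle$ is invariant under both actions, together with Weyl's complete reducibility; verifying the invariance of the $\langle f_j\rangle$ and that all composition factors are trivial is the technical heart of the argument.
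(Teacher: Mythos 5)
Your proposal is correct in substance but takes a genuinely different route from the paper's. The paper proves $[R,\mathfrak{sl_2}]=[\mathfrak{sl_2},R]=0$ by direct computation: it writes every mixed product $[e,f_i],[f_i,e],[e,x_i],\dots$ as an unknown linear combination of the $f_j$ (justified by Proposition \ref{radnil}), then kills the coefficients one by one through a long chain of Leibniz identities, a case split on $\alpha_i\in\{-1,0\}$, and finally the $\mathfrak{sl_2}$ relations such as $[e,h]=2e$; notably, it never actually invokes completeness of $R$. You instead argue structurally: completeness turns the restricted right multiplications $R_s|_R$ into inner derivations, forcing $[R,\mathfrak{sl_2}]\subseteq[R,R]=N$; the lines $\langle f_j\rangle$ are one-dimensional submodules, perfectness of $\mathfrak{sl_2}$ makes them trivial, and Weyl's theorem (trivial submodule $N$, trivial quotient $R/N$) makes all of $R$ trivial; the left action is then bootstrapped from the vanishing of the right action. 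This buys something the paper's computation does not: nothing in your argument is specific to $\mathfrak{sl_2}$ --- only semisimplicity (for Weyl) and perfectness (no nontrivial one-dimensional representations) are used --- so your proof establishes the proposition for an arbitrary semisimple Levi subalgebra in place of $\mathfrak{sl_2}$, and it exposes exactly where completeness of $R$ enters. The paper's approach, in exchange, is elementary and rehearses the coefficient manipulations reused in the inductive theorem that follows it.

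One inaccuracy to repair in the left-action step: writing $\lambda(s)=[s,-]$, the vanishing of the correction term gives $\lambda(s)\lambda(s')=\lambda([s,s'])$, and this does \emph{not} make $\lambda$ a Lie algebra homomorphism; antisymmetrizing yields $[\lambda(s),\lambda(s')]=2\lambda([s,s'])$, so it is $\tfrac12\lambda$ that is a homomorphism $\mathfrak{sl_2}\to\mathfrak{gl}(R)$ (if $\lambda$ itself were one, comparing the two relations would already force $\lambda([s,s'])=0$). This costs nothing: invariance of the subspaces $\langle f_j\rangle$ and triviality of submodules are insensitive to the rescaling, so every subsequent step goes through verbatim. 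Alternatively, note that $\lambda(s)^2=\lambda([s,s])=0$ for every $s$, so in the rescaled module the semisimple element $h$ acts both semisimply and nilpotently, hence by zero, and triviality of the whole module follows at once, without examining the $\langle f_j\rangle$ at all.
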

\begin{proof}
We  will prove that  the relation
\begin{equation}\label{eq1}
 [R,\mathfrak{sl_2} ]=[\mathfrak{sl_2} ,R]=0
 \end{equation}  is valid.
There exists a basis $\{e,f,h, f_1, f_2, \dots, f_k, x_1, x_2, \dots, x_k\}$  of the algebra $L,$ due to the Proposition \ref{radnil}, such that the products
$[R,\mathfrak{sl_2} ]$ and $[\mathfrak{sl_2} ,R]$ have the
following form:

$$\begin{array}{lll}
\, [e,f_i]=\sum\limits_{j=1}^{k}\alpha_{i,j}f_j,&
[h,f_i]=\sum\limits_{j=1}^{k}\beta_{i,j}f_j, & [f,f_i]=\sum\limits_{j=1}^{k}\gamma_{i,j}f_j,\\
\,[f_i,e]=\sum\limits_{j=1}^{k}\alpha{'}_{i,j}f_j,&
 [f_i,h]=\sum\limits_{j=1}^{k}\beta{'}_{i,j}f_j, &[f_i,f]=\sum\limits_{j=1}^{k}\gamma{'}_{i,j}f_j,\\

\, [e,x_i]=\sum\limits_{j=1}^{k}\delta_{i,j}f_j,& [h,x_i]=\sum\limits_{j=1}^{k}\sigma_{i,j}f_j, & [f,x_i]=\sum\limits_{j=1}^{k}\tau_{i,j}f_j,\\
\,[x_i,e]=\sum\limits_{j=1}^{k}\delta{'}_{i,j}f_j,&
[x_i,h]=\sum\limits_{j=1}^{k}\sigma{'}_{i,j}f_j, &[x_i,f]=\sum\limits_{j=1}^{k}\tau{'}_{i,j}f_j,\\
\, & & 1\leq i\leq k. \\
\end{array}$$

We consider Leibniz identity for $1\leq i\leq k,$
$$[f_i,[e,x_i]]=[[f_i,e],x_i]-[[f_i,x_i],e]=[\sum\limits_{j=1}^{k}\alpha{'}_{i,j}f_j,x_i]-[f_i,e]=$$
$$=\alpha{'}_{i,i}f_i-\sum\limits_{j=1}^{k}\alpha{'}_{i,j}f_j=
-\sum\limits_{j=1,\ j\neq i}^{k}\alpha{'}_{i,j}f_j=[f_i,\sum\limits_{j=1}^{k}\delta_{i,j}f_j]=0,$$
thus  $\alpha{'}_{i,j}=0$ for $  1\leq j\leq k, \ j\neq i. $
$$[e,[x_i,x_j]]=0=[[e,x_i],x_j]-[[e,x_j],x_i]=[\sum\limits_{t=1}^{k}\delta_{i,t}f_t,x_j]-[\sum\limits_{t=1}^{k}\delta_{j,t}f_t,x_i]=$$
$$=\delta_{i,j}f_j-\delta_{j,i}f_i,$$
thus $\delta_{i,j}=0$ for $1\leq i,j\leq k,\ j\neq i.$

From $[f_i,x_i]+[x_i,f_i]\in Ann_r(L)$ we obtain the following relations for $1\leq i\leq k$
$$[e,[f_i,x_i]+[x_i,f_i]]=0=[e,(1+\alpha_i)f_i]=(1+\alpha_i)[e,f_i],$$
if $\alpha_i=0$ for some $1\leq i\leq k,$ then  $\alpha_{i,j}=0$ for any $1\leq j\leq k.$

Next we consider  the Leibniz identity for  $\alpha_i=-1,$ where $1\leq i\leq k:$
$$[e,[f_i,x_i]]=[e,f_i]=\sum\limits_{j=1}^{k}\alpha_{i,j}f_j=[[e,f_i],x_i]-[[e,x_i],f_i]=[\sum\limits_{j=1}^{k}\alpha_{i,j}f_j,x_i]-$$
$$-[\sum\limits_{j=1}^{k}\delta_{i,j}f_j,f_i]=
\alpha_{i,i}f_i,$$
thus  $\alpha_{i,j}=0$ for $1\leq j\leq k, \ j\neq i. $
$$[[x_i,e],f_i]=[\sum\limits_{j=1}^{k}\delta{'}_{i,j}f_j,f_i]=0=
[x_i,[e,f_i]]+[[x_i,f_i],e]=[x_i,\alpha_{i,i}f_i]-[f_i,e]=$$$$=-\alpha_{i,i}f_i-\alpha{'}_{i,i}f_i=
-(\alpha_{i,i}+\alpha{'}_{i,i})f_i,$$
we have $\alpha{'}_{i,i}=-\alpha_{i,i}.$
$$[x_i,[e,x_j]] = [[x_i,e],x_j]-[[x_i,x_j],e]=[\sum\limits_{t=1}^{k}\delta{'}_{i,t}f_t,x_j]=\delta{'}_{i,j}f_j=$$$$=[x_i,\delta_{j,j}f_j]=\delta_{j,j}[x_i,f_j],$$
thus  $$\delta{'}_{i,j}=0, \ 1\leq i,j\leq k, \ j\neq i,$$

if $\alpha_i=-1$ for some $1\leq i\leq k,$ then $\delta{'}_{i,i}=-\delta_{i,i},$

if $\alpha_i=0$ for some $1\leq i\leq k,$ then $\delta{'}_{i,i}=0.$

Similarly, if we  replace $e$  alternately by $h$ and $f$ in the above Leibniz identity, we obtain
for some  $1\leq i\leq k,$ such  that $\alpha_i=-1$
$$[e,f_i]=\alpha_{i,i}f_i, \ [f_i,e]=-\alpha_{i,i}f_i, \ [e,x_i]=\delta_{i,i}f_i, \ [x_i,e]=-\delta_{i,i}f_i,$$
$$[h,f_i]=\beta_{i,i}f_i, \ [f_i,h]=-\beta_{i,i}f_i, \ [h,x_i]=\sigma_{i,i}f_i, \ [x_i,h]=-\sigma_{i,i}f_i,$$
$$[f,f_i]=\gamma_{i,i}e_i, \ [f_i,f]=-\gamma_{i,i}f_i, \ [f,x_i]=\tau_{i,i}f_i, \ [x_i,f]=-\tau_{i,i}f_i,$$
for some  $1\leq i\leq k,$ such  that $\alpha_i=0$
$$[e,f_i]=0, \ [f_i,e]=\alpha{'}_{i,i}f_i, \ [e,x_i]=\delta_{i,i}f_i, \ [x_i,e]=0,$$
$$[h,f_i]=0, \ [f_i,h]=\beta{'}_{i,i}f_i, \ [h,x_i]=\sigma_{i,i}f_i, \ [x_i,h]=0,$$
$$[f,f_i]=0, \ [f_i,f]=\gamma{'}_{i,i}f_i, \ [f,x_i]=\tau_{i,i}f_i, \ [x_i,f]=0.$$

Using the latter equalities in combination with the following
$$[f_i,[e,h]] = [[f_i,e],h]-[[f_i,h],e] =[-\alpha_{i,i}f_i,h]-[-\beta_{i,i}f_{i},e]=$$
$$=\alpha_{i,i}\beta_{i,i}f_i-\beta_{i,i}\alpha_{i,i}f_i=0=
[f_i,2e]=-2\alpha_{i,i}f_{i},$$
$$[x_i,[e,h]]=0=[x_i,2e]=-2\delta_{i,i}f_{i},$$
we get $\alpha_{i,i}=\delta_{i,i}=0,$ analogically, $\beta_{i,i}=\sigma_{i,i}=\gamma_{i,i}=\tau_{i,i}=0$  for some  $1\leq i\leq k,$  such  that  $\alpha_i=-1.$

In a similar way from the Leibniz identity for
$[f_i,[e,h]$ and $[[e,h],f_i],$ we derive that $\alpha{'}_{i,i}=0,$  $\delta_{i,i}=0$  for some  $1\leq i\leq k,$  such  that  $\alpha_i=0,$ also,
$\beta{'}_{i,i}=\sigma_{i,i}=\gamma{'}_{i,i}=\tau_{i,i}=0.$

It follows that  the relation (\ref{eq1}) is valid, i.e. $L=R\oplus \mathfrak{sl_2},$
 which completes the proof of proposition.
\end{proof}

Furthermore, by $\tau$ we denote the nilindex of $N$, that is, $N^{\tau-1}\neq0$ and $N^{\tau}=0$.

The description of  solvable Leibniz algebras for which the codimension
of nilradical is equal to the number of generators of the nilradical is given in \cite{Abdurasulov}.
\begin{thm} \label{thm35}\cite{Abdurasulov} Let $R=N\oplus Q$ be a solvable Leibniz algebra such that $dimQ=dim (N/N^2)=k.$ Then $R$ admits a
basis $\{e_1, e_2, \dots, e_n, x_1, x_2, \dots, x_k\}$ such that the table of multiplication in $R$ has the following
form:
\begin{equation}\label{table1}
\left\{\begin{array}{ll}
[e_i,e_j]=\sum\limits_{t=k+1}^{n}c_{i,j}^te_t,& 1\leq i, j\leq n,\\[1mm]
[e_i,x_i]=e_i,& 1\leq i\leq k,\\[1mm]
[x_i,e_i]=(b_i-1)e_i, & b_i \in \{0,1\}, 1\leq i\leq k,\\[1mm]
[e_i,x_j]=a_{i,j}e_i,& k+1\leq i\leq n,\ \ 1\leq j\leq k,\\[1mm]
[x_j,e_i]=\sum\limits_{t=1}^{q}b_{j,i}^{i_t}e_{i_t}, &  k+1\leq i\leq n, \ 1\leq j\leq k,\\[1mm]
\end{array}\right.
\end{equation}
where omitted products are equal to zero and $a_{i,j}$ is the number of entries of a generator basis element $e_j$ involved in forming of non generator basis element $e_i$.
\end{thm}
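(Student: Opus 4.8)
The plan is to read off the structure of $R$ from the action of the complementary subspace $Q$ on the nilradical $N$ by right multiplications, exploiting that the equality $\dim Q=\dim(N/N^2)=k$ makes $R_{x_1}\vert_N,\dots,R_{x_k}\vert_N$ a maximal family of nil-independent outer derivations of $N$. First I would fix the basis: choose $e_1,\dots,e_k\in N$ projecting to a basis of $N/N^2$ (the generators) and extend to a basis $e_1,\dots,e_n$ of $N$ with $e_{k+1},\dots,e_n\in N^2=\langle e_{k+1},\dots,e_n\rangle$. Each non-generator $e_i$ is then an iterated bracket of the generators, and I would let $a_{i,j}$ count the number of occurrences of $e_j$ in such a bracketing of $e_i$.

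The central step is to diagonalize the torus. Since the $k$ operators $R_{x_j}\vert_N$ are nil-independent and $k$ equals the number of generators, they span a maximal torus $T$ on $N$ and in particular act semisimply; simultaneously diagonalizing them yields a basis of common eigenvectors. No nonzero element of $T$ can have weight $0$ on every generator, for it would then map $N$ into $N^2$ and hence act nilpotently, contradicting semisimplicity; therefore the $k$ weights of $e_1,\dots,e_k$ are linearly independent and span $T^{*}$. After a linear change of basis in $Q$ I may thus assume these weights are the coordinate functionals, i.e. $[e_i,x_j]=\delta_{ij}e_i$ for $1\le i,j\le k$, and in particular $[e_i,x_i]=e_i$. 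Applying the derivation identity $R_{x_j}[e_p,e_q]=[R_{x_j}e_p,e_q]+[e_p,R_{x_j}e_q]$ along the bracketing that produces each non-generator then gives $[e_i,x_j]=a_{i,j}e_i$ for $k+1\le i\le n$.

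Next I would determine the left multiplications. Since $[R,R]\subseteq N$, the torus acts trivially on $R/N$, so after normalising the $x_j$ to be weight vectors, comparing weights confines $[x_j,e_i]$ to the span of basis elements sharing the weight of $e_i$; this produces the two families $[x_i,e_i]=(b_i-1)e_i$ (for $1\le i\le k$) and $[x_j,e_i]=\sum_t b_{j,i}^{i_t}e_{i_t}$. The dichotomy $b_i\in\{0,1\}$ is the crux, and I would extract it from two instances of the Leibniz identity: the triple $(x_i,e_i,x_i)$, together with $[e_i,x_i]=e_i$, forces $[[x_i,x_i],e_i]=0$, while the triple $(x_i,x_i,e_i)$ gives $[[x_i,x_i],e_i]=(b_i-1)b_i\,e_i$; comparing the two yields $b_i(b_i-1)=0$. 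This is the same mechanism, via $[e_i,x_i]+[x_i,e_i]\in\Ann_r(R)$, used in the proof of Proposition \ref{Abelianprop}.

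Finally, every bracket of two elements of $N$ lies in $N^2=\langle e_{k+1},\dots,e_n\rangle$, which gives $[e_i,e_j]=\sum_{t=k+1}^{n}c_{i,j}^t e_t$, and all remaining products vanish by weight considerations, the sum of the two relevant weights failing to be a weight of $R$. I expect the main obstacle to be the diagonalization step: one must prove that exactly $k$ nil-independent derivations, with $k$ the number of generators, really do assemble into a maximal torus acting with linearly independent weights on the generators. This rests on the structure theory of maximal tori of nilpotent Leibniz algebras rather than on a direct computation, and once it is in place everything downstream is forced by the Leibniz identity.
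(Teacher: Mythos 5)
First, a point of comparison: this paper contains no proof of Theorem \ref{thm35} at all --- it is imported verbatim from \cite{Abdurasulov} --- so your attempt can only be measured against that source, and the gap in your proposal is exactly the content of that source's proof. You assert that, because the $k$ operators $R_{x_1}|_N,\dots,R_{x_k}|_N$ are nil-independent and $k$ equals the number of generators of $N$, they ``span a maximal torus $T$ on $N$ and in particular act semisimply.'' Nothing of the sort is automatic. Nil-independence says only that no nontrivial linear combination is nilpotent; each individual operator may still have a nonzero nilpotent part in its Jordan decomposition, and the operators need not commute --- they commute only modulo the nilpotent operators $R_n|_N$, $n\in N$, since $R_{x_i}R_{x_j}-R_{x_j}R_{x_i}=R_{[x_j,x_i]}$ with $[x_j,x_i]\in N$. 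Consequently the ``linear change of basis in $Q$'' that is supposed to produce $[e_i,x_j]=\delta_{ij}e_i$ is not available: the assignment $x\mapsto$ (semisimple part of $R_x|_N$) need not even land in a single commuting family. Proving that in the extremal case $\dim Q=\dim(N/N^2)$ one can simultaneously modify the basis of $N$ and the complement $Q$ so that these operators become diagonal, with the generators as eigenvectors and with that eigenvalue pattern, is the heart of the theorem; it cannot be discharged by appeal to ``structure theory of maximal tori of nilpotent Leibniz algebras,'' because the statement needed is precisely what \cite{Abdurasulov} proves. As written, your argument is circular at its core: granting the diagonalization, the rest is indeed forced by the Leibniz identity, but the diagonalization \emph{is} the theorem.

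The parts you do carry out are essentially sound, with caveats. The extraction of $b_i\in\{0,1\}$ from the two triples $(x_i,e_i,x_i)$ and $(x_i,x_i,e_i)$ is correct, and it is the same mechanism (via $[e_i,x_i]+[x_i,e_i]\in \Ann_r$) that this paper uses in the proof of Proposition \ref{Abelianprop}; likewise the observation that a torus element vanishing on all generators vanishes on $N$, so the generator weights span. But your ``weight considerations'' for the remaining products are too quick: left multiplication is not a derivation of a Leibniz algebra, and $x_j$ is a weight vector only modulo $N$, so confining $[x_j,e_i]$ to the weight space of $e_i$ requires running the Leibniz identity with correction terms coming from $[x_j,x_l]\in N$, not pure weight bookkeeping. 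For the same reason the vanishing of the products $[x_i,x_j]$ --- part of ``omitted products are equal to zero'' --- does not follow from ``the sum of the two relevant weights is not a weight'' and needs its own argument.
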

It should be noted that the solvable Leibniz algebra $R$ is given in Theorem \ref{thm35} is also complete \cite{Abdurasulov}.

In the following  theorem we prove that if a Leibniz algebra with semisimple part  $\mathfrak{sl_2},$ and whose complete ideal is a solvable Leibniz algebra such that the codimension of
 its nilradical is equal to the number of generators
of the nilradical  then it satisfies the above conjecture.
\begin{thm} Let $L$ be a Leibniz algebra such that $L=R\dot{+}\mathfrak{sl_2} $ is its
Levi decomposition, which the complete solvable ideal $R$ is given in Theorem \ref{thm35}.
Then $L=R\oplus \mathfrak{sl_2} $ in other words, $L$ is the direct sum of ideals.
\end{thm}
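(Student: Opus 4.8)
The plan is to reduce the assertion $L=R\oplus\mathfrak{sl_2}$ to the two relations $[R,\mathfrak{sl_2}]=0$ and $[\mathfrak{sl_2},R]=0$, just as in Proposition~\ref{Abelianprop}. Since $R$ is already an ideal and $\mathfrak{sl_2}$ is perfect ($[\mathfrak{sl_2},\mathfrak{sl_2}]=\mathfrak{sl_2}$), once these two brackets vanish we obtain $[L,\mathfrak{sl_2}]=[\mathfrak{sl_2},L]=\mathfrak{sl_2}$, so that $\mathfrak{sl_2}$ is a two-sided ideal and the Levi decomposition becomes a direct sum of ideals. Everything therefore reduces to showing that $\mathfrak{sl_2}$ acts trivially on $R$ from both sides.

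First I would kill the right action $[R,\mathfrak{sl_2}]$, and this is exactly where the completeness of $R$ enters. For $s\in\mathfrak{sl_2}$ the inner derivation $R_s$ of $L$ preserves the ideal $R$, and one checks from the Leibniz identity that $R_s|_R$ is a derivation of $R$; by completeness $R_s|_R\in\Inner(R)$. The Lie algebra $\Inner(R)$ is solvable, being the image of the solvable algebra $R$ under $r\mapsto R_r$ (this map carries the derived series of $R$ onto that of $\Inner(R)$). Using the identity $R_xR_y-R_yR_x=R_{[y,x]}$ restricted to $R$, the assignment $s\mapsto R_s|_R$ is a Lie algebra anti-homomorphism $\mathfrak{sl_2}\to\Inner(R)$; its image is then a solvable quotient of the simple algebra $\mathfrak{sl_2}$, hence $0$. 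Thus $R_s|_R=0$ for all $s$, that is, $[R,\mathfrak{sl_2}]=0$.

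With the right action gone, the left action falls to a short computation inside $\mathfrak{sl_2}$. Writing $\ell_s\colon r\mapsto[s,r]$ for left multiplication, the Leibniz identity $[s,[s',r]]=[[s,s'],r]-[[s,r],s']$ together with $[s,r]\in R$ and the already established $[R,\mathfrak{sl_2}]=0$ yields $\ell_s\ell_{s'}=\ell_{[s,s']}$ as operators on $R$, for all $s,s'\in\mathfrak{sl_2}$. In particular $\ell_s^2=\ell_{[s,s]}=0$. Taking $h=[e,f]$ and expanding $\ell_h^2=\ell_e\ell_f\ell_e\ell_f$ while repeatedly applying $\ell_s\ell_{s'}=\ell_{[s,s']}$ and the bracket relations of $e,f,h$ reduces $\ell_h^2$ to a nonzero multiple of $\ell_h$ (in fact $2\ell_h$); comparing with $\ell_h^2=0$ forces $\ell_h=0$, after which $\ell_e=\ell_f=0$ follow from $\ell_h\ell_e=\ell_{[h,e]}$ and $\ell_h\ell_f=\ell_{[h,f]}$. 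Hence $[\mathfrak{sl_2},R]=0$, and together with the previous step this gives $L=R\oplus\mathfrak{sl_2}$.

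The only genuinely delicate point is the vanishing of the right action $[R,\mathfrak{sl_2}]$: the whole force of the argument is that completeness squeezes the right action of $\mathfrak{sl_2}$ into the solvable algebra $\Inner(R)$, which cannot receive a nonzero image of a simple algebra. I expect the step $[\mathfrak{sl_2},R]=0$ to be routine by comparison. Should one instead prefer the direct computational route of Proposition~\ref{Abelianprop}, writing out all products of $e,f,h$ with the basis $\{e_1,\dots,e_n,x_1,\dots,x_k\}$ of $R$ from Theorem~\ref{thm35} and eliminating the coefficients through Leibniz identities, the obstacle becomes purely one of bookkeeping: the non-abelian, two-layered nilradical, with generators $e_1,\dots,e_k$ and non-generators $e_{k+1},\dots,e_n$ tied together by the products $[e_i,e_j]=\sum_t c_{i,j}^t e_t$, produces many more structure constants and relations than in the abelian case treated in Proposition~\ref{Abelianprop}.
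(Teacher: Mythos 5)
Your proof is correct, but it takes a genuinely different route from the paper's. The paper proceeds by induction along the lower central series of the nilradical $N$: it shows $[R/N^m,\mathfrak{sl_2}]=[\mathfrak{sl_2},R/N^m]=0$ for $2\leq m\leq\tau$, using the abelian-nilradical case (Proposition \ref{Abelianprop}) as the base and carrying out the induction step by explicit Leibniz-identity computations with the structure constants of the table \eqref{table1} from Theorem \ref{thm35} — in particular it exploits the fact that for every non-generator $e_i$ there is a generator index $j$ with $a_{i,j}\neq 0$. Your argument never touches the multiplication table. The right action vanishes because completeness traps each $R_s|_R$, $s\in\mathfrak{sl_2}$, inside $\Inner(R)$, which is solvable (indeed $\Inner(R)^{[k]}\subseteq\{R_z\colon z\in R^{[k]}\}$), and the image of the simple algebra $\mathfrak{sl_2}$ under the map $s\mapsto R_s|_R$ — an anti-homomorphism, hence a homomorphism after composing with $x\mapsto -x$ — must then be zero. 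The left action vanishes from the operator identity $\ell_s\ell_{s'}=\ell_{[s,s']}$, which is valid precisely because the right action has already been killed, combined with the $\mathfrak{sl_2}$ relations: $\ell_h^2=\ell_{[h,h]}=0$ while also $\ell_h^2=(\ell_h\ell_e)\ell_f=2\ell_e\ell_f=2\ell_h$, forcing $\ell_h=0$ and then $\ell_e=\ell_f=0$. All of these steps are sound, and the final passage to $L=R\oplus\mathfrak{sl_2}$ as a sum of two-sided ideals is routine since $[\mathfrak{sl_2},\mathfrak{sl_2}]=\mathfrak{sl_2}$. What your approach buys is generality and brevity: it uses only that $R$ is a complete solvable radical and that the Levi factor is $\mathfrak{sl_2}$, so it simultaneously subsumes Proposition \ref{Abelianprop}, the theorem quoted from \cite{Complete}, and the present theorem, and it would extend to any semisimple Levi factor (which is spanned by its $\mathfrak{sl_2}$-triples); in effect it settles the paper's Conjecture for all solvable complete radicals with semisimple part $\mathfrak{sl_2}$, rather than for one family of nilradicals at a time. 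What the paper's computational induction provides instead is an explicit, self-contained verification at the level of structure constants (it does not need completeness as a working tool, only the form of the table) together with the intermediate statements about the quotients $R/N^m$; but as a proof of the stated theorem, your argument is both shorter and strictly stronger.
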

\begin{proof}
Let $L$ be a Leibniz algebra and $L=R\dot{+}\mathfrak{sl_2} $ be its
Levi decomposition, where $R$ is a solvable ideal which the codimension of
the nilradical is equal to the number of generators of the
nilradical.
%
Then we  will show that the relation (\ref{eq1}) is satisfied.

First we note that  for any $m,$ the space $N^m$ forms an ideal of $R.$  In order to prove the theorem, we will prove by  induction on $m$  ($2\leq m \leq \tau$) that the quotient algebra $R/N^{m}$ admits a basis such that the following products are valid:
\begin{equation}\label{eq3}
[R/N^m,\mathfrak{sl_2} ]=[\mathfrak{sl_2} ,R/N^m]=0.
\end{equation}

For $m=2$ the solvable quotient Leibniz algebra $R/N^2$ has an abelian nilradical. Due to the Proposition \ref{Abelianprop} we conclude that the  products (\ref{eq3}) are valid.

Let us assume that the products (\ref{eq3}) are true for all $m$. In the quotient algebra $R/N^{m+1}$ we need to consider the products $[R/N^{m+1},\mathfrak{sl_2} ]$ and $[\mathfrak{sl_2},R/N^{m+1}].$
Then  the products $[R/N^{m+1},\mathfrak{sl_2} ]$ and $[\mathfrak{sl_2} ,R/N^{m+1}]$ have the
following form: for the sake of convenience further we will use congruences without indicating modulo $N^{m+1}$.

$$\begin{array}{lll}
\, [e,e_i]=\sum\limits_{j=1}^{n_{m-1}}\alpha_{i,j}e_{k+n_1+\ldots+n_{m-2}+j},&
[h,e_i]=\sum\limits_{j=1}^{n_{m-1}}\beta_{i,j}e_{k+n_1+\ldots+n_{m-2}+j}, & [f,e_i]=\sum\limits_{j=1}^{n_{m-1}}\gamma_{i,j}e_{k+n_1+\ldots+n_{m-2}+j},\\
\,[e_i,e]=\sum\limits_{j=1}^{n_{m-1}}\alpha{'}_{i,j}e_{k+n_1+\ldots+n_{m-2}+j},&
 [e_i,h]=\sum\limits_{j=1}^{n_{m-1}}\beta{'}_{i,j}e_{k+n_1+\ldots+n_{m-2}+j}, &[e_i,f]=\sum\limits_{j=1}^{n_{m-1}}\gamma{'}_{i,j}e_{k+n_1+\ldots+n_{m-2}+j},\\

\, [e,x_i]=\sum\limits_{j=1}^{n_{m-1}}\delta_{i,j}e_{k+n_1+\ldots+n_{m-2}+j},& [h,x_i]=\sum\limits_{j=1}^{n_{m-1}}\sigma_{i,j}e_{k+n_1+\ldots+n_{m-2}+j}, & [f,x_i]=\sum\limits_{j=1}^{n_{m-1}}\tau_{i,j}e_{k+n_1+\ldots+n_{m-2}+j},\\
\,[x_i,e]=\sum\limits_{j=1}^{n_{m-1}}\delta{'}_{i,j}e_{k+n_1+\ldots+n_{m-2}+j},&
[x_i,h]=\sum\limits_{j=1}^{n_{m-1}}\sigma{'}_{i,j}e_{k+n_1+\ldots+n_{m-2}+j}, &[x_i,f]=\sum\limits_{j=1}^{n_{m-1}}\tau{'}_{i,j}e_{k+n_1+\ldots+n_{m-2}+j},\\
\, & & 1\leq i\leq k+n_1+\ldots+n_{m-2}+n_{m-1}, \\
\end{array}$$
where $\{e_1,e_2,\ldots,e_k,e_{k+1},\ldots,e_{k+n_1},\ldots,e_{k+n_1+n_2+\ldots+n_{m-2}+1},\ldots,e_{k+n_1+n_2+\ldots+n_{m-2}+n_{m-1}},...,e_{k+n_1+n_2+\ldots+n_{\tau-2}}\}$ is a basis of $N$ and
$\{e_{k+n_1+\ldots+n_{m-2}+1},e_{k+n_1+\ldots+n_{m-2}+2},\ldots,e_{k+n_1+\ldots+n_{m-2}+n_{m-1}}\}\in N^{m}\setminus N^{m+1},$  $dim N=k+n_1+n_2+\ldots+n_{\tau-2}=n.$

 We consider  Leibniz identity for $1\leq i,l\leq k$, $l\neq i$
$$[e,[e_i,x_l]]=0=[[e,e_i],x_l]-[[e,x_l],e_i]=[\sum\limits_{j=1}^{n_{m-1}}\alpha_{i,j}e_{k+n_1+\ldots+n_{m-2}+j},x_l]-$$
$$-[\sum\limits_{j=1}^{n_{m-1}}\delta_{l,j}e_{k+n_1+\ldots+n_{m-2}+j},e_i]=\sum\limits_{j=1}^{n_{m-1}}\alpha_{i,j}a_{k+n_1+\ldots+n_{m-2}+j,l}e_{k+n_1+\ldots+n_{m-2}+j}.$$
Note that for any $i,$ $k+1\leq i \leq n$ there exists $j,$ $1\leq j \leq k$ such that $a_{i,j}\neq 0.$
Thus  $\alpha_{i,j}=0$ for $1\leq j\leq n_{m-1}.$
$$[e_i,[e,x_l]]=[e_i,\sum\limits_{j=1}^{n_{m-1}}\delta_{l,j}e_{k+n_1+\ldots+n_{m-2}+j}]=0=[[e_i,e],x_l]-[[e_i,x_l],e]=$$
$$=[\sum\limits_{j=1}^{n_{m-1}}\alpha{'}_{i,j}e_{k+n_1+\ldots+n_{m-2}+j},x_l]
=\sum\limits_{j=1}^{n_{m-1}}\alpha{'}_{i,j}a_{k+n_1+\ldots+n_{m-2}+j,l}e_{k+n_1+\ldots+n_{m-2}+j},$$
so  $\alpha{'}_{i,j}=0$ for $1\leq j\leq n_{m-1}.$ We have that $[e,e_i]=[e_i,e]=0,$ for $1\leq i \leq k,$ and deduce that
$[e,e_i]=[e_i,e]=0,$ for $k+1\leq i \leq k+n_1+\ldots+n_{m-1}.$ Similarly, $[h,e_i]=[e_i,h]=[f,e_i]=[e_i,f]=0,$ for $1\leq i \leq k+n_1+\ldots+n_{m-1}.$

Using the latter equalities in combination with the following
$$[x_i,[e,h]]=[x_i,2e]=2[x_i,e]=[[x_i,e],h]-[[x_i,h],e]=$$$$=[\sum\limits_{j=1}^{n_{m-1}}\delta{'}_{i,j}e_{k+n_1+\ldots+n_{m-2}+j},h]-
[\sum\limits_{j=1}^{n_{m-1}}\sigma{'}_{i,j}e_{k+n_1+\ldots+n_{m-2}+j},e]=0,$$
$$[[e,h],x_i]=[2e,x_i]=[e,[h,x_i]]+[[e,x_i],h]=$$$$=[e,\sum\limits_{j=1}^{n_{m-1}}\sigma_{i,j}e_{k+n_1+\ldots+n_{m-2}+j}]+
[\sum\limits_{j=1}^{n_{m-1}}\delta_{i,j}e_{k+n_1+\ldots+n_{m-2}+j},h]=0,$$ we get $[x_i,e]=[e,x_i]=0,$ for $1\leq i \leq k+n_1+\ldots+n_{m-1}.$

Finally, from the Leibniz identity for $[x_i,[h,f]],$ $[[h,f],x_i],$ $[x_i,[e,f]]$ and $[[e,f],x_i]$ we deduce that $[x_i,h]=[h,x_i]=[x_i,f]=[f,x_i]=0,$ for $1\leq i \leq k+n_1+\ldots+n_{m-1}.$

 From this multiplication it follows that  the products (\ref{eq3}) are true for $m+1.$

Taking $m=\tau$ in the products (\ref{eq3}) we have the equality (\ref{eq1}) and we complete the proof of the theorem.
\end{proof}


\begin{thebibliography}{99}

\bibitem{Abdurasulov} K.K. Abdurasulov, B.A. Omirov, I.S. Rakhimov, {\it On some solvable Leibniz algebras and their completeness.} arXiv:2201.02776.
\bibitem{Adashev} J.K. Adashev, M. Ladra, B.A. Omirov, {\it Solvable Leibniz Algebras with Naturally Graded Non-Lie
p-Filiform Nilradicals.} Communications in Algebra,  45(10), 2017,  pp. 4329--4347.
\bibitem{Adashev1}	J.K. Adashev, L.M. Camacho, B.A. Omirov, {\it Solvable Leibniz algebras with naturally graded non-Lie p-filiform nilradicals whose maximal complemented space of its nilradical.} Linear and Multilinear Algebra,  69(8), 2021,  pp. 1500--1520.
\bibitem{Ancochea} J.M. Ancochea Berm\'{u}dez, R. Campoamor-Stursberg, {\it On a complete rigid Leibniz non-Lie algebra in arbitrary dimension.} Linear Algebra and its Applications, 438(8), 2013, pp. 3397--3407.
\bibitem{AyupovOmirovRakhimov}
Sh.A. Ayupov, B.A. Omirov., I.S. Rakhimov, {\it Leibniz algebras, Structure and Classification.} Taylor \& Francis, 2019, 323 p.
\bibitem{Complete} Sh.A. Ayupov, A.Kh. Khudoyberdiyev, Z.Kh. Shermatova, {\it On complete Leibniz algebras.} International Journal of Algebra and Computation, 32(2), 2022, pp. 265--288.
\bibitem{Barnes} D.W. Barnes, {\it On Levi's theorem for Leibniz algebras.} Bull. Aust. Math. Soc., 86(2), 2012, pp. 184--185.
\bibitem{Gao} Y.C. Gao,  D.J. Meng {\it Complete Lie algebras with $l$-step nilpotent radicals.}   Chin. Ann. of Math. 4, 2002, pp. 545--550.
\bibitem{Gorbachev} V.V. Gorbatsevich, {\it On liezation of the Leibniz algebras and its applications.} Russian Mathematics,  60(4), 2016, pp. 10--16.
\bibitem{Goto} M. Goto, {\it Faithful representation of Lie groups I }, Math. Japonicae, 1, 1948.
\bibitem{jacobson} N. Jacobson, \textit{Lie Algebras}, Dover, New York, 1979.
\bibitem{Loday} J.-L. Loday, {\it Une version non commutative des alg\`ebres de Lie: les alg\`ebres de Leibniz.}
Enseign. Math. (2), 39(3-4), 1993, pp. 269--293.
\bibitem{Mamadaliyev} U.Kh. Mamadaliyev, B.A. Omirov, {\it Cohomologically rigid solvable Leibniz algebras with nilradical of arbitrary characteristic sequence.} Siberian Mathematical Journal, 61(3) 2020, pp. 504--515.
\bibitem{Meng} D.J. Meng, {\it Some results on complete Lie algebras.} Communications in Algebra, 22(13), 1994, pp. 5457--5507.
\bibitem{Schenkman} E.V. Schenkman, {\it A theory of subinvariant Lie algebras.} Amer. J. Math. 73, 1951, pp. 453--474.



\end{thebibliography}
\end{document}